\theoremstyle{plain} \newtheorem{theorem}{Theorem}[section] \newtheorem{prop}[theorem]{Proposition}
\newtheorem{lemma}{Lemma}[section] \newtheorem{corol}{Corollary}[theorem]  \theoremstyle{definition} 
  \theoremstyle{remark} 
\numberwithin{equation}{section}     
   \theoremstyle{theorem} \newtheorem{theoremA}{Theorem}
\begin{document}

\title[The Shuffle Variant of a Diop. eq. of Miyazaki and Togb\'{e}] {The Shuffle Variant of a Diophantine equation of Miyazaki and Togb\'{e}}

\author{El\.{I}f K{\i}z{\i}ldere, G\"{o}khan Soydan, Qing Han and Pingzhi Yuan}

\address{{\bf Elif K{\i}z{\i}ldere}\\ Department of Mathematics \\ Bursa Uluda\u{g} University\\
 16059 Bursa, Turkey}
\email{elfkzldre@gmail.com}

\address{{\bf G\"{o}khan Soydan} \\ 	Department of Mathematics \\ 	Bursa Uluda\u{g} University\\ 	16059 Bursa, Turkey} \email{gsoydan@uludag.edu.tr }

\address{{\bf Qing Han} \\ Faculty of Common Courses\\
 South China Business College of Guangdong University of Foreign Studies Guangzhou, 510545, China}
 \email{46620467@qq.com}

\address{{\bf Pingzhi Yuan} \\ 	School of Mathematics\\ 	South China Normal University\\ 	Guangzhou 510631, China} \email{mcsypz@mail.sysu.edu.cn}

\newcommand{\acr}{\newline\indent}

\thanks{}

\subjclass[2010]{11D61, 11J86} \keywords{Exponential Diophantine equation, Baker's method}

\begin{abstract}
In 2012, T. Miyazaki and A. Togb\'{e} gave all of the solutions of the Diophantine equations $(2am-1)^x+(2m)^y=(2am+1)^z$ and $b^x+2^y=(b+2)^z$ in positive integers $x,y,z,$ $a>1$ and $b\ge 5$ odd. In this paper, we propose a similar problem (which we call the shuffle variant of a Diophantine equation of Miyazaki and Togb\'{e}). Here we first prove that the Diophantine equation $(2am+1)^x+(2m)^y=(2am-1)^z$ has only the solutions $(a, m, x, y, z)=(2, 1, 2, 1, 3)$ and $(2,1,1,2,2)$ in positive integers $a>1,m,x,y,z$. Then using this result, we show that the Diophantine equation $b^x+2^y=(b-2)^z$ has only the solutions $(b,x, y, z)=(5, 2, 1, 3)$ and $(5,1,2,2)$ in positive integers $x,y,z$ and $b$ odd.
\end{abstract}

\maketitle

\section{Introduction}\label{sec:1}

Denote the sets of all integers and positive integers by $\mathbb{Z}$ and $\mathbb{N}$, respectively. Suppose that $A, B, C$ are pairwise coprime positive integers. The exponential Diophantine equation \begin{equation}\label{1.1} 	A^x+B^y=C^z, \ \ x, y, z \in \mathbb{N} \end{equation} was studied for given $A,B,C$ by many authors. So, this equation has a rich history. In 1933,
the first work was recorded by Mahler, \cite{Ma}. He proved the finiteness of the solutions of equation \eqref{1.1} under the assumption that $A,B,C>1$. His method is a $p$-adic analogue of that given by Thue-Siegel, so it is ineffective in the sense that it gives no indication on the number of possible solutions. Seven years later, an effective result for solutions of \eqref{1.1} was given by Gel'fond what will become known as Baker's method, based on lower bounds for linear forms in the logarithms of algebraic numbers, \cite{Ge}. Such an information has been obtained in particular instances.
So, in 1956, Sierpi\'{n}ski proved that $(x,y,z)=(2,2,2)$ is the only positive integral solution of the equation $3^x+4^y=5^z$, \cite{Si}. The same year, Je\'{s}manowicz conjectured that if $A, B, C$ are Pythagorean numbers, i.e. positive integers satisfying $A^2+B^2=C^2$, then the Diophantine equation $A^x+B^y=C^z$ has only the positive integral solution
$(x,y,z)=(2,2,2)$, \cite{J}. This conjecture is still open despite the efforts of many authors. Between the years 1958 and 1976, the complete solutions of equation \eqref{1.1} where $A, B, C$ are distinct primes $\leq 17$ were determined by some authors (see \cite{Ha}, \cite{Na} and \cite{U}). Other conjectures related to equation \eqref{1.1} were set and discussed. One is the
extension of Je\'{s}manowicz' conjecture due to Terai. In fact, Terai conjectured that if $A, B, C, P, Q, R \in \mathbb{N}$ are fixed and $A^P+B^Q=C^R$ where $P, Q, R \geq 2$ and $\gcd(A,B)=1$, then the Diophantine equation \eqref{1.1} has only the solution $(x,y,z)=(P,Q,R)$ except for a handful of triples $(A,B,C)$, (see \cite{Cao, Le, Mi1, Mi2} and \cite{T1, T2, T3, T4, T5}). This conjecture has been proved to be true in many special cases. However, it is still unsolved in its full generality. Recently, a survey paper on the conjectures of Je\'{s}manowicz
and Terai has been written by Soydan, Demirci, Cang\"{u}l and Togb\'{e}, (see \cite{SCDT}).

For the special case where $A \equiv -1 \pmod{B}$ and $C=A+2$, the equation takes the form \begin{equation}\label{1} (tB-1)^x+B^y=(tB+1)^z, \end{equation} where $t$ is a positive integer. Clearly, it suffices to consider the case where $B$ is even. We see that the equation \eqref{1} has the following solutions: \begin{equation*} \label{allsol} (x,y,z)=\begin{cases} \ (i,1,1);i \ge
1, \ (j,3,2);j \ge 1 & \text{if $B=2$ and $t=1$,}\\ \ (2,k\!+\!1, 2) & \text{if $t=B^k/4$ with $k \ge 1$,}\\ \ (1,1,1) & \text{if $B=2$,}\\ \ (1, 13, 2) & \text{if $B=2$ and $t=45$.}
\end{cases} \end{equation*} These solutions will be referred to as {\it trivial solutions}.

In 2012, Miyazaki and Togb\'{e} proved that equation \eqref{1} has no non-trivial solutions when $t$ is odd, \cite{MiTo}. In 2016, Miyazaki, Togb\'{e} and Yuan gave the following result in \cite{MiToYu}:
\begin{theoremA}\label{thm:A1} Equation \eqref{1} has no non-trivial solutions.
\end{theoremA}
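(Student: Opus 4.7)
\medskip

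\noindent\textbf{Proof proposal.} By \cite{MiTo} we may assume $t$ is even, and by the discussion preceding the statement it suffices to treat $B$ even and to exclude the four listed trivial families. The first step is a careful congruence analysis. Reducing \eqref{1} modulo $B$ yields $(-1)^x \equiv 1 \pmod{B}$, so $x$ is even whenever $B \ge 4$. Expanding the two binomials modulo $B^2$ and $B^3$ then refines this to relations of the shape $x + z \equiv 0 \pmod{B/\gcd(B,t)}$ and $xz \equiv 0 \pmod{B}$ once $y \ge 2$, with minor case distinctions depending on $v_2(t)$ and $v_2(B)$. Reducing instead modulo $tB-1$ and $tB+1$ gives $2^z \equiv B^y \pmod{tB-1}$ and $(-2)^x \equiv -B^y \pmod{tB+1}$, placing $x,y,z$ in explicit residue classes modulo the multiplicative orders of $2$ and of $B$ in these moduli. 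Together these congruences show that none of $x, y, z$ can be too small (for instance $x + z \ge B$).

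\medskip

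\noindent The second step is Baker's method. Since $B^y = (tB+1)^z - (tB-1)^x$, a size comparison forces the linear form
\begin{equation*}
\Lambda \;=\; z \log(tB+1) - x \log(tB-1) - y \log B
\end{equation*}
to be exponentially small, namely $|\Lambda| \ll (tB)^{-\min(x,z)}$. Matveev's lower bound for linear forms in three logarithms then produces an explicit upper bound $\max(x,y,z) \le C (\log(tB))^{\kappa}$ with moderate $C$ and small $\kappa$. In parallel, a $p$-adic linear-form analysis at $p = 2$, legitimate because $B$ is even, bounds the $2$-adic valuation of the analogous algebraic linear form and yields a second inequality of the form $y \log 2 \ll \log x \cdot \log z \cdot \log(tB)$. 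Feeding the congruences from Step~1 (in particular $x + z \equiv 0 \pmod{B}$) back into these two Baker-style bounds confines the triple $(t, B, \max(x,y,z))$ to an effectively computable finite range, inside which the remaining candidate equations are disposed of by reducing to finitely many Thue or Thue--Mahler equations and a direct computer verification.

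\medskip

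\noindent The main obstacle, in my view, lies in closing the gap between the Baker-style upper bound (typically on the order of $10^{15}$ or larger) and a computationally feasible search range. The usual remedy is to locate small prime divisors $p$ of $tB \pm 1$ with sharp control on the multiplicative orders of $2$ and of $B$ modulo $p$; but such primes may be scarce precisely in the residual case $t$ even, $B \equiv 2 \pmod{4}$, which forces a rather delicate ad hoc analysis. A secondary subtlety is the existence of sporadic trivial solutions such as $(B,t,x,y,z) = (2, 45, 1, 13, 2)$: any uniform congruential argument must be calibrated so as not to accidentally eliminate these, which is one reason the authors of \cite{MiToYu} presumably separate out the small-$B$ case and treat it by an independent elementary argument before applying the logarithmic-forms machinery to $B \ge 4$.
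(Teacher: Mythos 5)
You should note at the outset that the paper does not prove Theorem \ref{thm:A1} at all: it is quoted from \cite{MiToYu}, and the closest internal comparison is the proof of Theorem \ref{theo.1.1}, which follows the same strategy (a valuation lemma plus Laurent's two-logarithm bound plus a finite computation). Measured against that, your sketch has one concrete analytic error and one structural gap. The error: with your three-term form $\Lambda = z\log(tB+1)-x\log(tB-1)-y\log B$ one has $e^{\Lambda}=\bigl((tB-1)^x+B^y\bigr)/\bigl((tB-1)^xB^y\bigr)=B^{-y}+(tB-1)^{-x}$, so $\Lambda$ is a \emph{large negative} number, of size about $\min\{y\log B,\;x\log(tB-1)\}$, not exponentially small; Matveev's theorem applied to it yields nothing. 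The quantity that is actually small is the two-term form $z\log(tB+1)-x\log(tB-1)$, which satisfies $0<\Lambda<B^y/(tB-1)^x$ in the regime where $(tB-1)^x$ dominates (the other regime must be handled separately), and this is precisely why \cite{MiToYu} and the present paper use a two-logarithm bound (Proposition \ref{La}), whose sharp constants are also what make the terminal computation feasible.

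The structural gap is more serious: the theorem quantifies over all $t$ and $B$, so a Baker-type bound of the shape $\max(x,y,z)\le C(\log tB)^{\kappa}$ cannot by itself ``confine $(t,B,\max(x,y,z))$ to a finite range''--- nothing in your sketch bounds $t$ or $B$. Your congruence $x+z\equiv 0 \pmod{B/\gcd(B,t)}$ is correct for $x$ even and $y\ge 2$, but it degenerates exactly when $t$ is divisible by high powers of the primes of $B$ (which is where the trivial family $(x,y,z)=(2,k+1,2)$, $t=B^k/4$ lives), so the asserted conclusion ``$x+z\ge B$'' is simply unavailable there. The missing ingredient is the key valuation lemma of \cite{MiToYu}, whose analogue here is Lemma \ref{key}: for every prime $p\mid B$ with $v_p(x+z)<v_p(t)+v_p(B)$ one has $v_p(t)=(y-1)v_p(B)-v_p(x+z)$, which gives the uniform inequality $tB\ge B^{y}(x+z)^{-y}$ and, taken at $p=2$, essentially pins down $y$ in terms of $v_2(t)$. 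It is this arithmetic input, fed into the two-logarithm estimate, that makes all bounds uniform in $t$ and $B$, isolates the trivial families rather than colliding with them, and reduces the theorem to small cases plus a finite check; without it (or a genuine substitute), your Step 2 does not close.
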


Using Theorem \ref{thm:A1}, they also proved the following result:
\begin{theoremA}\label{thm:A2} Suppose that $a>1$ is an odd positive integer. Then the Diophantine equation
\begin{equation*} a^x+2^y=(a+2)^z
\end{equation*} has only the positive solution $(x,y,z)=(1,1,1)$, whenever neither
\begin{equation*} a=2^{k-1}-1
\end{equation*}
with an integer $k\ge 3$
nor $a=89$. If $a=2^{k-1}-1$ or $a=89$, then the additional solutions are given by $(2,k+1,2)$, $(1,13,2)$, respectively. \end{theoremA}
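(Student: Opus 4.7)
The plan is to derive the theorem as a direct corollary of Theorem~\ref{thm:A1}. Since $a>1$ is odd, the integer $a+1$ is even, so I set $B=2$ and $t=(a+1)/2\ge 1$. Then $tB-1=a$ and $tB+1=a+2$, so the equation $a^x+2^y=(a+2)^z$ is precisely the specialization of \eqref{1} at these parameters. Applying Theorem~\ref{thm:A1} immediately reduces the problem to enumerating the trivial solutions listed above for the parameter pair $(B,t)=(2,(a+1)/2)$.

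The next step is to walk through the four trivial-solution families and decide which ones occur under the hypothesis $a>1$ odd. The family attached to $B=2$, $t=1$ forces $a=1$, so it is excluded by hypothesis. The universal $B=2$ entry always contributes the solution $(x,y,z)=(1,1,1)$. The family $(x,y,z)=(2,k+1,2)$, arising from $t=B^{k}/4=2^{k-2}$, requires $k\ge 2$ for $t$ to be a positive integer; the subcase $k=2$ again gives $t=1$ hence $a=1$ (excluded), while each $k\ge 3$ produces exactly $a=2^{k-1}-1$ and contributes the additional solution $(2,k+1,2)$. Finally, the case $B=2$, $t=45$ corresponds to $a=89$ and yields the extra solution $(1,13,2)$. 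No other trivial family is compatible with $B=2$ and $a>1$ odd.

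All that remains is a routine verification, by direct substitution, that each listed triple indeed satisfies $a^x+2^y=(a+2)^z$ for the corresponding value of $a$; the identities $a^2+2^{k+1}=(a+2)^2$ when $a=2^{k-1}-1$ and $89+2^{13}=91^2$ are immediate. I do not foresee any serious obstacle: once the correspondence $B=2$, $t=(a+1)/2$ is recognized, the whole statement reduces to bookkeeping against the list of trivial solutions supplied by Theorem~\ref{thm:A1}. The genuinely hard analytic work---lower bounds for linear forms in logarithms and the associated case analysis---is entirely absorbed into Theorem~\ref{thm:A1}.
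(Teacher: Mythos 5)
Your reduction is correct and is exactly the route the paper indicates: Theorem~\ref{thm:A2} is quoted from Miyazaki--Togb\'e--Yuan and is obtained there by specializing equation~\eqref{1} to $B=2$, $t=(a+1)/2$ and sieving the list of trivial solutions via Theorem~\ref{thm:A1}, which is precisely your argument. Your bookkeeping of the four trivial families (excluding $t=1$, i.e.\ $a=1$, and identifying $t=2^{k-2}$ with $a=2^{k-1}-1$, $k\ge 3$, and $t=45$ with $a=89$) is accurate, so nothing is missing.
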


More recently, Fu, He, Yang and Zhu in \cite{FHYZ} considered the Diophantine equation \begin{equation}\label{1.3} (n+2)^x+(n+1)^y=n^z  \ \ n, x, y, z \in \mathbb{N}. \end{equation} They
obtained the following result (For variations of equation \eqref{1.3}, we refer the reader to \cite{BKSY} and \cite{HT}): \begin{theoremA}\label{thm:A3} Equation \eqref{1.3} has only one
positive integer solution $(n,x,y,z)$ $=(3,1,1,2)$. \end{theoremA}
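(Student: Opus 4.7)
The plan is to combine elementary congruence arguments with estimates for linear forms in logarithms of algebraic numbers, broadly mirroring the strategy behind Theorem~\ref{thm:A1}. First I would reduce the equation $(n+2)^x + (n+1)^y = n^z$ modulo $n+1$ to obtain $1 \equiv (-1)^z \pmod{n+1}$; since the cases $n = 1, 2$ are easily ruled out by direct inspection, this forces $z$ to be even. Reducing modulo $n$ gives $2^x + 1 \equiv 0 \pmod{n}$, so $n$ must be odd and hence at least $3$, and reducing modulo $n+2$ yields the auxiliary relation $(-1)^y \equiv 2^z \pmod{n+2}$.

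With the parity of $z$ in hand, I would next dispose of the small-exponent cases directly. When $z = 2$ the inequalities $(n+1)^2 > n^2$ and $(n+2)^2 > n^2$ force $x = y = 1$, and the equation reduces to $n^2 - 2n - 3 = 0$, whose unique positive root is $n = 3$, yielding the claimed solution. The boundary situations $x = 1$ or $y = 1$ with larger even $z$ similarly reduce to low-degree polynomial identities in $n$, each having only finitely many integer roots to inspect.

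For the generic range $x, y \ge 2$ and $z \ge 4$, I would pass to Baker's method. Introduce the two linear forms
\begin{equation*}
\Lambda_1 := z \log n - x \log(n+2), \qquad \Lambda_2 := z \log n - y \log(n+1).
\end{equation*}
From the equation one of the two summands on the left-hand side dominates, so either $|\Lambda_1| = O\bigl((n+1)^y / n^z\bigr)$ or $|\Lambda_2| = O\bigl((n+2)^x / n^z\bigr)$ is very small. On the other hand Matveev's theorem on linear forms in two logarithms provides an explicit lower bound of the form $|\Lambda_i| \ge \exp\bigl(-C (\log n)^2 \log \max(x, y, z)\bigr)$ for an absolute constant $C$. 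Comparing the two estimates yields an upper bound on $\max(x, y, z)$ that grows polynomially in $\log n$.

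The hard part, as usual for equations of this type, will be to close the gap between this Baker-type upper bound and the elementary lower bounds. My plan is to sharpen the congruences from the first step by $p$-adic and lifting-the-exponent arguments, forcing $x$, $y$ and $z$ to be divisible by large factors coming from prime divisors of $n$, $n+1$ and $n+2$; for $n$ larger than some effective constant $n_0$ this divisibility contradicts the Baker bound. The remaining finite range $n \le n_0$ is then handled by a direct computer search, which will return no solution other than $(n, x, y, z) = (3, 1, 1, 2)$.
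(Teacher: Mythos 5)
Note first that the paper you are working from does not prove Theorem~\ref{thm:A3} at all: it is quoted as a known result of Fu, He, Yang and Zhu \cite{FHYZ}, so there is no internal proof to compare against; your proposal therefore has to stand on its own, and as it stands it has real gaps. The opening congruence reductions are fine ($z$ even for $n\ge 2$, $n$ odd, and the complete treatment of $z=2$ giving $(3,1,1,2)$), but the claim that the boundary cases $x=1$ or $y=1$ with $z\ge 4$ ``reduce to low-degree polynomial identities in $n$'' is simply false: in $(n+2)+(n+1)^y=n^z$ the exponents $y,z$ are still free, so these are themselves genuine exponential Diophantine equations, not polynomial equations with finitely many roots, and they need the same heavy machinery as the generic case. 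In the Baker step there is a further unaddressed case: your smallness estimate for $\Lambda_1$ (resp.\ $\Lambda_2$) requires the term $(n+2)^x$ (resp.\ $(n+1)^y$) to dominate strongly; in the balanced regime where $(n+2)^x$ and $(n+1)^y$ are of comparable size, both linear forms are only of size about $\log 2$ and the comparison with the lower bound yields no bound on the exponents, so an extra argument (e.g.\ a gap principle or a different linear form) is needed there.

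The most serious gap, however, is the one you yourself flag as ``the hard part.'' Baker-type bounds only give $\max(x,y,z)\ll (\log n)^{2}\log\log n$ (say), i.e.\ a bound on the exponents \emph{in terms of} $n$; to finish one must bound $n$ absolutely, and your proposed mechanism --- lifting-the-exponent and $p$-adic arguments ``forcing $x$, $y$ and $z$ to be divisible by large factors coming from prime divisors of $n$, $n+1$, $n+2$'' --- is asserted without any indication of why such divisibilities should hold or grow with $n$; LTE gives valuation identities, not lower bounds on the exponents, and nothing in your congruences produces divisibility constraints that scale with $n$. Since this is exactly the crux of the theorem (the passage from ``exponents small relative to $n$'' to ``$n$ bounded''), the proposal as written is a plausible opening plus a programme, not a proof. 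For comparison, the actual resolution in \cite{FHYZ} (and the analogous arguments in the present paper for equation \eqref{eq1.4}) proceeds by exploiting the parity of $z$ to factor, invoking sharp classification results for equations such as $U^2+2^k=V^l$ and Pillai-type equations with fixed prime bases, and only then using a two-logarithm lower bound (Laurent) together with explicit counting/computation to close the finite range; some substitute for that structural input is what your sketch is missing.
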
 In this work, we propose analogs of Theorem \ref{thm:A1} and Theorem \ref{thm:A2} which we call
the shuffle variants of Diophantine equations of Miyazaki and Togb\'{e}, \cite{MiTo}. So, here we are interested in the following Diophantine equation \begin{equation*}
(an+a+1)^x+(n+1)^y=(an+a-1)^z;  \ \ n, x, y, z \in \mathbb{N}. \end{equation*} where $a$ is a fixed positive integer. According to Theorem \ref{thm:A3}, since the case $a=1$ was already solved completely by Fu, He, Yang and Zhu, \cite{FHYZ}, it is clear that the above equation has a solution only if $n$ is odd. So, we only need to consider the following Diophantine
equation \begin{equation*} (2am+1)^x+(2m)^y=(2am-1)^z; \ \ m, x, y, z \in \mathbb{N}. \end{equation*}
Consider the above equation. Here, we extend Theorem \ref{thm:A3} by proving the following result which is a shuffle variant of equation \eqref{1} in Theorem 1.2 of \cite{MiTo}.
\begin{theorem}[Main theorem]\label{theo.1.1}
The Diophantine equation \begin{equation}\label{eq1.4} (2am+1)^x+(2m)^y=(2am-1)^z
\end{equation} has only the solutions $(a, m, x, y, z)=(2, 1, 2, 1, 3)$ and $(2,1,1,2,2)$ in positive integers $a>1,$ $m,x,y,z$.
\end{theorem}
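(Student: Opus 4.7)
The plan is to rewrite \eqref{eq1.4} as $(2am-1)^z - (2am+1)^x = (2m)^y$, observe that the right-hand side is positive so $z>x\ge 1$, and tackle the problem in three stages: elementary congruences for the cases $y=1$ and $y=2$, and Baker's method for $y\ge 3$.

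First, reducing \eqref{eq1.4} modulo $2am$ yields the key congruence
\[
 1 + (2m)^y \equiv (-1)^z \pmod{2am}.
\]
For $y=1$ this forces either $a=1$ (when $z$ is even) or $am\mid m+1$ (when $z$ is odd); since $a>1$, both branches collapse to $a=2,m=1$, leaving the residual equation $5^x+2=3^z$. A short argument working modulo higher powers of $3$ and $5$ extracts the unique solution $(x,z)=(2,3)$, giving the first exceptional tuple of the theorem.

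Next, for $y=2$ the same congruence gives either $a\mid 2m$ with $z$ even, or $am\mid 1+2m^2$ with $z$ odd. A supplementary reduction modulo $4$ (and modulo $8$ when $m$ is even) restricts the parities of $x$ and $z$ further; combining these restrictions with the divisibility conditions again forces $(a,m)=(2,1)$, reducing the problem to $5^x+4=3^z$ and yielding the second exceptional tuple $(x,z)=(1,2)$.

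The remaining case $y\ge 3$ is the substance of the proof. Here I would apply Baker's method to the linear form
\[
 \Lambda \;=\; z\log(2am-1) - x\log(2am+1),
\]
which the equation forces to satisfy $|\Lambda|\ll (2m)^y/(2am+1)^x$, so $\log|\Lambda|$ is very negative as soon as $x$ is not tiny. Matveev's theorem on linear forms in two logarithms of algebraic numbers, by contrast, gives an explicit lower bound of the shape $\log|\Lambda|\ge -C(a,m)\log\max(x,z)$. Combining these inequalities with the trivial estimate $z\log(2am-1)\approx x\log(2am+1)+y\log(2m)$ yields an effective upper bound on $\max(x,y,z)$ in terms of $a$ and $m$. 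A parallel $2$-adic analysis via the binomial expansion of $(2am\pm 1)^{\bullet}$ controls $v_2\bigl((2am-1)^z-(2am+1)^x\bigr)$ and eliminates the bulk of the remaining configurations.

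The hard part, as is typical for equations of this shape, will be balancing the combinatorial explosion of subcases coming from the modular analysis against the size of the bound produced by Matveev's theorem: one must extract bounds small enough that the leftover finite check is computationally feasible. In practice this forces a separate ad hoc treatment of the regime of small $a$ and $m$, and an appeal to Theorem~\ref{thm:A1} to dispatch those configurations which are equivalent to the unshuffled equation~\eqref{1}.
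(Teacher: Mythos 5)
There is a genuine gap at the heart of your plan: your Baker-method stage only produces a bound on $\max(x,y,z)$ \emph{in terms of $a$ and $m$}, but $a$ and $m$ are unknowns of the problem, so this leaves infinitely many equations and no finite check to run. The real difficulty, and the content of the paper's proof, is obtaining an \emph{absolute} bound on $A=2am+1$. This is done by combining three ingredients you do not have: (1) the key 2-adic lemma (Lemma \ref{key}), coming from the congruence $a(x+z)\equiv-(2m)^{y-1}\pmod{2ma^2}$, which gives $y\le v_2(a)+1\le\frac{\log a}{\log 2}+1$ and the inequality $2am\ge(2m)^y(x+z)^{-y}$; (2) in the regime $A^x>(2m)^y$, the elementary estimates $x\ge am$, $(2m)^y<A^{x/2}$ and the gap inequality $x>\frac{C\log C}{2}(z-x)$ with $C=2am-1$; (3) Laurent's two-logarithm bound (Proposition \ref{La}) applied to $\Lambda=z\log C-x\log A$, whose comparison with $\log\Lambda<-(x/2)\log A$ bounds $s=x/\log C$ by an absolute constant, whence $A<10082$ (refined to $A<5044$), $y\le10$, and only then a feasible PARI/GP verification. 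Your remark that one needs ``a separate ad hoc treatment of the regime of small $a$ and $m$'' points in the wrong direction: small $a,m$ are harmless; it is large $a,m$ that must be excluded, and nothing in your outline does this.

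Several of your elementary reductions are also unsubstantiated or misattributed. For $y=2$ with $z$ even, the congruence modulo $2am$ only yields $a\mid 2m$, and no combination of reductions modulo $4$ or $8$ is known to force $(a,m)=(2,1)$; in the paper $y=2$ is not an easy separate case but is absorbed into the general $y>1$ analysis, where the subcase $2\mid x$, $2\mid z$ is killed by the factorization $(2m)^y=PQ$ with $P,Q=(2am-1)^{z/2}\pm(2am+1)^{x/2}$ (forcing $y>3$ and invoking Proposition \ref{po4}, Proposition \ref{prop.1} and Scott--Styer for $3^Z-5^X=2$), while the subcase with $x$ odd requires the full machinery above plus the final computation. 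Likewise, the residual equation $5^x+2=3^z$ is a Pillai-type equation that the paper settles by citation (Nagell, Scott--Styer), not by ``a short argument modulo higher powers of $3$ and $5$''; and the result actually used to close the $m=1$, $2\mid a$, $x$ odd case is Theorem \ref{thm:A3} applied to $(2^y+1)^x+2^y=(2^y-1)^z$, not Theorem A1, which concerns the unshuffled equation and does not transfer.
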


Furthermore, using Theorem \ref{theo.1.1}, we prove the following result which is the shuffle variant of the equation \eqref{1.3} in Theorem 1.2 of \cite{MiTo}.
\begin{corol}\label{Cor.1.1.1}
The Diophantine equation \begin{equation}\label{eq1.5}
b^x+2^y=(b-2)^z
\end{equation}
has only the solutions $(b,x, y, z)=(5, 2, 1, 3)$ and $(5,1,2,2)$ in positive integers $x,y,z$ and $b$ odd.
\end{corol}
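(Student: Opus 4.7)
The plan is to reduce Corollary \ref{Cor.1.1.1} directly to Theorem \ref{theo.1.1} by a change of parameters, since the two equations have essentially the same shape once the base $b$ is encoded in the form $2am+1$.

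First I would handle the small values of $b$ by inspection. Since $(b-2)^z$ must be a positive integer, we need $b\ge 3$. If $b=1$ the right-hand side equals $(-1)^z\in\{-1,1\}$, which is incompatible with $b^x+2^y\ge 3$. If $b=3$ the right-hand side equals $1$ while $3^x+2^y\ge 5$, so no solution exists. Hence we may assume $b$ is an odd integer with $b\ge 5$.

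For $b\ge 5$ odd, I would set $a=(b-1)/2$, so that $a\ge 2$ is a positive integer and $b=2a+1$, $b-2=2a-1$. Taking $m=1$, equation \eqref{eq1.5} becomes
\begin{equation*}
(2a\cdot 1+1)^x+(2\cdot 1)^y=(2a\cdot 1-1)^z,
\end{equation*}
which is exactly equation \eqref{eq1.4} specialized to $m=1$ with $a>1$. Applying Theorem \ref{theo.1.1} (whose hypothesis $a>1$ is satisfied since $a\ge 2$), the only possible tuples are $(a,m,x,y,z)=(2,1,2,1,3)$ and $(a,m,x,y,z)=(2,1,1,2,2)$; translating back via $b=2a+1$ gives $b=5$ and the two candidate solutions $(b,x,y,z)=(5,2,1,3)$ and $(5,1,2,2)$. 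A direct verification $5^2+2=27=3^3$ and $5+2^2=9=3^2$ confirms both.

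There is no genuine obstacle here beyond the bookkeeping needed to dispose of the edge cases $b\in\{1,3\}$ and to check that the substitution $m=1$, $a=(b-1)/2$ satisfies the hypotheses of Theorem \ref{theo.1.1}; once that is done, the corollary is an immediate specialization of the main theorem rather than an independent argument.
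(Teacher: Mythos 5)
Your reduction is exactly the intended derivation: the paper obtains Corollary \ref{Cor.1.1.1} by specializing Theorem \ref{theo.1.1} to $m=1$ with $b=2a+1$, and your handling of the edge cases $b\in\{1,3\}$ (where $a>1$ fails) plus the verification of the two surviving solutions is correct. Nothing further is needed.
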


\section{A key lemma}
The following lemma  and its proof are almost the same as the key lemma in Miyazaki, Togb\'{e} and Yuan, \cite{MiToYu}. For the convenience of the reader, we present the proof here.

For a prime number $p$ and a non-zero integer $A$, we denote by $v_{p}(A)$ the exponent of $p$ in the prime factorization of $A$.

\begin{lemma}\label{key} Let $(a,m,x,y,z)$ be a solution of equation \eqref{eq1.4} with $m>1$. \begin{itemize} \item[\rm (i)] If $x$ is odd, then $y=\frac{v_{2}(a)}{v_{2}(m)+1}+1$.
\item[\rm (ii)] $2am \ge (2m)^y(x+z)^{-y}$. \end{itemize} \end{lemma}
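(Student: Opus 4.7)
The core strategy is to expand both sides of \eqref{eq1.4} as polynomials in $N := 2am$ and then read off $p$-adic information. As a preliminary step, I would reduce \eqref{eq1.4} modulo $2m$: since $(2am+1)^x \equiv 1$, $(2m)^y \equiv 0$, and $(2am-1)^z \equiv (-1)^z \pmod{2m}$, we obtain $(-1)^z \equiv 1 \pmod{2m}$. Because $m>1$ forces $2m \ge 4$, this forces $z$ to be even. With $z$ even, the two binomial expansions combine into the polynomial identity
\[
(2m)^y = (2am-1)^z - (2am+1)^x = \sum_{i \ge 1} c_i (2am)^i,
\]
where $c_1 = -(x+z)$ and $c_i = (-1)^i \binom{z}{i} - \binom{x}{i} \in \mathbb{Z}$ for $i \ge 2$.

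For part (i), assume $x$ is odd; since $z$ is even, $x+z$ is odd, so $v_{2}(x+z) = 0$. Writing $c := v_{2}(a)$, $d := v_{2}(m)$ and $e := v_{2}(2am) = c+d+1$, the $i=1$ term of the sum has $v_{2}\bigl(c_1 \cdot 2am\bigr) = e$, while for $i \ge 2$ every term satisfies $v_{2}\bigl(c_i (2am)^i\bigr) \ge ie \ge 2e > e$. Hence the minimum is uniquely attained at $i=1$, giving $v_{2}\bigl((2m)^y\bigr) = e$, i.e., $y(d+1) = c+d+1$, which rearranges to $y = v_{2}(a)/(v_{2}(m)+1) + 1$.

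For part (ii), I would prove the stronger divisibility $(2m)^y \mid 2am\,(x+z)^y$, which immediately yields the inequality since both sides are positive integers. This reduces to verifying $v_{p}\bigl((2m)^y\bigr) \le v_{p}(a) + v_{p}(2m) + y\,v_{p}(x+z)$ for every prime $p$. For $p \nmid 2m$ this is immediate once we observe that reducing \eqref{eq1.4} modulo $2am$ forces $2am \mid (2m)^y$, so every prime dividing $a$ already divides $2m$, hence $v_{p}(a) = 0$ for such $p$. For $p \mid 2m$, I would apply the same polynomial identity and split into two cases: either the $i=1$ term uniquely minimises the $p$-adic valuation among the summands, in which case the equality $y\,v_{p}(2m) = v_{p}(x+z) + v_{p}(a) + v_{p}(2m)$ reduces the desired bound to the trivial $(y-1)v_{p}(x+z) \ge 0$; or some $i \ge 2$ term ties or beats the $i=1$ term, and then the bound $v_{p}(c_i) \ge 0$ gives $v_{p}(x+z) \ge v_{p}(2am)$, from which the target inequality follows by a short direct arithmetic computation.

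The main delicate point is the case analysis in part (ii). One has to argue using only the crude estimate $v_{p}(c_i) \ge 0$ rather than attempt to compute the $p$-adic valuations of the binomial coefficients, which depend sensitively on $x$ and $z$; fortunately, combined with the inequality $ie \ge 2e$ for $i \ge 2$ when $p \mid 2m$, this crude bound is already sufficient to push both cases through in a uniform way.
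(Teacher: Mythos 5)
Your proposal is correct and takes essentially the same approach as the paper: your binomial expansion in powers of $2am$ encodes exactly the paper's congruence $a(x+z)\equiv -(2m)^{y-1}\pmod{2a^{2}m}$, and your part (i) is the same $p=2$ valuation computation. In part (ii) you repackage the same dichotomy on $v_{p}(x+z)$ versus $v_{p}(a)+v_{p}(2m)$ as the prime-by-prime divisibility $(2m)^{y}\mid 2am\,(x+z)^{y}$, whereas the paper multiplies the resulting valuation identity over the primes where it applies and bounds the complementary factor $S$ by $x+z$; both routes yield the stated inequality.
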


\begin{proof} Notice that \begin{align*} &(2am-1)^z \equiv (-1)^{z}+(-1)^{z-1}2amz \pmod{4a^2m^2},\\ &(2am+1)^x \equiv 1+2amx \pmod{4a^2m^2}. \end{align*} Taking equation
\eqref{eq1.4} modulo $4a^2m^2$, one sees that $$ 1+2amx +(2m)^y \equiv (-1)^{z}+(-1)^{z-1}2amz \pmod{4a^2m^2}. $$ Considering this congruence modulo $2m$, one has $(-1)^z \equiv 1 \pmod{2m}$. Then we see that $z$ is even
as $m>1$. Hence, we get $$ a(x+z) \equiv -(2m)^{y-1} \pmod{2ma^2}. $$ Using the above congruence, if a prime factor $p$ of $2m$ satisfies $$ v_{p}(x+z)<v_{p}(a)+v_{p}(2m), $$ then we
obtain \begin{equation}\label{imp} v_{p}(a)=(y-1)\,v_{p}(2m)-v_{p}(x+z). \end{equation}

(i) Putting $p=2$ in \eqref{imp} completes the proof of this case.

(ii) Using \eqref{imp}, one obtains \begin{align*} a &\ge \prod_{\substack{p\,{\rm prime}, \ p \mid 2m, \\ v_{p}(x+z)<v_{p}(a)+v_{p}(2m)}} p^{v_{p}(a)} \\ &= \prod_{\substack{p\,{\rm prime}, \ p
\mid 2m, \\ v_{p}(x+z)<v_{p}(a)+v_{p}(2m)}} p^{\,(y-1)\,v_{p}(2m)-v_{p}(x+z)} \\ &=(2m)^{y-1} S^{-(y-1)} \prod_{\substack{p\,{\rm prime}, \ p \mid 2m, \\v_{p}(x+z)<v_{p}(a)+v_{p}(2m)}}
p^{-v_{p}(x+z)}, \end{align*} where $$ S=\prod_{\substack{p\,{\rm prime}, \ p \mid 2m, \\ v_{p}(x+z) \ge v_{p}(a)+v_{p}(2m)}} p^{v_{p}(2m)}. $$ Moreover, since $S \le x+z$, one gets
$$ S^{\,y-1} \prod_{\substack{p\,{\rm prime}, \ p \mid 2m, \\ v_{p}(x+z)<v_{p}(a)+v_{p}(2m)}} p^{v_{p}(x+z)} \le (x+z)^y. $$ 
The required inequality follows from all these.
\end{proof}

\section{Auxiliary results}

For an algebraic number $\alpha$ of degree $d$ over $\mathbb{Q}$, we define the absolute logarithmic height of $\alpha$ by the following formula: $$ {\rm h}(\alpha)= \dfrac{1}{d} \left( \log
\vert a_{0} \vert + \sum\limits_{i=1}^{d} \log \max \bigr\{ 1, \vert \alpha^{(i)}\vert \bigr\} \right), $$ where $a_{0}$ is the leading coefficient of the minimal polynomial of $\alpha$ over
$\mathbb{Z}$, and $\alpha^{(1)}, \alpha^{(2)}, \,...\,, \alpha^{(d)}$ are the conjugates of $\alpha$ in the field of complex numbers.

Let $\alpha_{1}$ and $\alpha_{2}$ be real algebraic numbers with $|\alpha_{1}|\ge 1$ and $|\alpha_{2}|\ge 1$. Consider the linear form in two logarithms $$
\varLambda=\beta_{2}\log\alpha_{2}-\beta_{1}\log \alpha_{1}, $$ where $\beta_{1}$ and $\beta_{2}$ are positive integers.

We rely on the following result due to Laurent, \cite{La}:

\begin{prop}\cite[Corollary 2] {La} \label{La} Suppose that $\alpha_1>1$, $\alpha_2>1$ are rational numbers which are multiplicatively independent. Then we have $$\log\vert\varLambda\vert
\ge -25.2 \cdot h(\alpha_1)\, h(\alpha_2) \,\max\{\log{\beta'}+0.38,10\}^2$$ where \begin{equation*} \beta'=\frac{\beta_1}{h(\alpha_2)}+\frac{\beta_2}{h(\alpha_1)}. \end{equation*}
\end{prop}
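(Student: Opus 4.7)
The plan is to prove the bound by Laurent's interpolation-determinant method, arguing by contradiction. Write $h_i = h(\alpha_i)$ and set $E := \max\{\log\beta'+0.38, 10\}$. Suppose $\varLambda \neq 0$ and $\log|\varLambda| < -25.2\, h_1 h_2\, E^2$. One introduces integer parameters $K, L, R, S$ (to be chosen roughly as $K \asymp E/h_1$, $L \asymp E/h_2$, with $R, S$ balancing $KL = RS$), and forms the square matrix
$$M = \bigl(\alpha_1^{k_1(r\beta_1+s\beta_2)}\, \alpha_2^{k_2(r\beta_1+s\beta_2)}\bigr),$$
with rows indexed by $(k_1,k_2) \in [0,K)\times[0,L)$ and columns by $(r,s) \in [0,R)\times[0,S)$. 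The goal is to produce contradictory upper and lower bounds for $\Delta := \det M$.

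For the analytic upper bound, I would substitute $\alpha_2^{\beta_2} = \alpha_1^{\beta_1}\e^{\varLambda}$ into each entry of $M$, converting it to a pure power of $\alpha_1$ times a small correction $\e^{k_2 s\varLambda}$. When $|\varLambda|$ is very small this makes the rows of $M$ nearly linearly dependent, and a Schwarz-lemma argument (of P\'olya--Waldschmidt type) applied to a suitable holomorphic extension of the determinant yields an upper bound whose dominant term is proportional to $RS\, E\, h_1 h_2\, \log|\varLambda|^{-1}$, up to lower-order growth terms of size $(Kh_1 + Lh_2)(R\beta_1 + S\beta_2)$. For the arithmetic lower bound, one first applies a zero lemma (which is where multiplicative independence of $\alpha_1, \alpha_2$ is used) to ensure $\Delta \neq 0$. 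Since $\alpha_1, \alpha_2 \in \QQ$, $\Delta$ is a non-zero rational whose denominator divides an explicit product of powers of the denominators of $\alpha_1, \alpha_2$; Liouville's inequality then yields a matching lower bound of the form $\log|\Delta| \ge -C\, KL\, (Kh_1 + Lh_2)(R\beta_1 + S\beta_2)$.

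Combining these estimates with the assumed smallness of $|\varLambda|$ forces an inequality in the parameters $K, L, R, S, h_1, h_2, \beta_1, \beta_2$ which, after optimization, cannot hold, yielding the contradiction. The main obstacle is the numerical calibration required to arrive precisely at the constant $25.2$; this demands the sharpest form of the Schwarz lemma (Waldschmidt's refinement involving $\sigma$-functions), careful Stirling estimates for the combinatorial factors in the determinant expansion, and a delicate simultaneous optimization of all four parameters so that the dominant terms cancel in just the right way. A secondary hurdle is the zero lemma establishing $\Delta\neq 0$; in the present rational setting it reduces to a Vandermonde-type criterion in the exponentials $\alpha_1^{\beta_1}, \alpha_2^{\beta_2}$, guaranteed by the multiplicative independence hypothesis, but any slackness here would cost in the final constant.
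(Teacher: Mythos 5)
This proposition is not proved in the paper at all: it is quoted verbatim from Laurent \cite[Corollary 2]{La}, so there is no internal argument to compare yours against; the authors' ``proof'' is a citation. Your sketch does correctly identify the method underlying Laurent's result --- an interpolation determinant, bounded from above analytically by a Schwarz-type lemma after exploiting the near-dependence created by small $|\varLambda|$, bounded from below arithmetically by Liouville's inequality, with a zero lemma (this is where multiplicative independence enters) guaranteeing nonvanishing. As a roadmap it is faithful to how such bounds are actually obtained.

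As a proof of the stated result, however, it has a genuine gap, which you yourself concede: the entire content of the proposition \emph{is} the explicit constant $25.2$, the precise shape of $\beta'$, and the threshold $10$, and none of these is derived. The step ``after optimization, cannot hold'' is precisely the part that cannot be waved through; in Laurent's paper, Corollary~2 is deduced from his Theorem~2 by a specific numerical choice of the free parameters, and reproducing that calibration is the whole work. Two further concrete issues. First, the matrix you write, with entries $\alpha_1^{k_1(r\beta_1+s\beta_2)}\alpha_2^{k_2(r\beta_1+s\beta_2)} = \bigl(\alpha_1^{k_1}\alpha_2^{k_2}\bigr)^{r\beta_1+s\beta_2}$, is a generalized Vandermonde in the single quantity $\gamma_{k}=\alpha_1^{k_1}\alpha_2^{k_2}$; Laurent's determinant instead has entries of the shape (binomial coefficient in $r\beta_2+s\beta_1$ and $k_1$) times $\alpha_1^{rk_2}\alpha_2^{sk_2}$, and the polynomial factor is essential to obtain the correct count of degrees of freedom and hence the dependence on $E^2$ rather than $E$. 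Second, since the result is external and standard, the appropriate treatment in this paper is citation, not reproof; if you do wish to reprove it you must carry out the full quantitative argument, not outline it.
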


\begin{prop}\cite[Table 1]{M15} \label{po4} Let $k>3$ be a positive integer. Then the Diophantine equation $$x^{2}+y^k=z^{4}; \quad x, y, z\in\mathbb{N}$$ has no solutions with $\gcd(x,y)=1$.\end{prop}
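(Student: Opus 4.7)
The plan is to treat $x^2 + y^k = z^4$ as a generalized Fermat equation of signature $(2,k,4)$ carrying a primitive solution, and to descend to a ternary equation of signature $(k,k,2)$ whose unsolvability is a known modular input. First, the hypothesis $\gcd(x,y)=1$ together with the identity $x^2 = z^4 - y^k$ immediately yields $\gcd(x,z)=\gcd(y,z)=1$, so the triple $(x,y,z)$ is fully primitive. I would then rewrite the equation as
\[
(z^2-x)(z^2+x) = y^k
\]
and split cases according to the parity of $y$ (equivalently, by the parities of $x$ and $z$, which are forced by the coprimality conditions).

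If $y$ is odd, then $z^2 \pm x$ are coprime (the gcd divides $2x$ and $2z^2$, and both factors are odd), so unique factorization in $\mathbb{Z}$ gives coprime positive integers $a<b$ with $z^2 - x = a^k$, $z^2 + x = b^k$, and $y = ab$. Adding these relations produces
\[
a^k + b^k = 2z^2,
\]
a generalized Fermat equation of signature $(k,k,2)$. If $y$ is even, a short $2$-adic computation yields $\gcd(z^2-x,z^2+x)=2$, and an analogous descent leads to an equation of the shape $2^{k-2}a^k + b^k = z^2$ with $\gcd(a,b)=1$, again of essentially ternary type with one ``contaminated'' factor $2^{k-2}$.

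The remaining task is to show that neither companion equation admits a primitive nontrivial solution when $k \geq 4$. For $k=4$ this is classical: the equations $a^4+b^4 = 2z^2$ and $4a^4+b^4 = z^2$ are both handled by Fermat-style infinite descent (the latter in the Gaussian integers). For $k \geq 5$ I would attach to any hypothetical solution a Frey--Hellegouarch elliptic curve of Bennett--Skinner type for signature $(k,k,2)$, apply Ribet's level-lowering to land at a weight-$2$ newform of small level (here $32$ or $64$), and rule out each resulting newform by a trace-of-Frobenius comparison at one or two small primes.

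The main obstacle is obtaining a uniform statement in $k$ rather than one exponent at a time: covering all $k \geq 5$ simultaneously rests on the Darmon--Merel theorem for signature $(n,n,2)$. Taking that modular result as a black box, the elementary descent above reduces Proposition \ref{po4} precisely to its invocation, which is what is tabulated in [M15, Table 1].
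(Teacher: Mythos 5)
The first thing to say is that the paper contains no proof of Proposition~\ref{po4} to compare against: the statement is imported wholesale from Table~1 of Bennett--Chen--Dahmen--Yazdani \cite{M15}. Your factorization $(z^2-x)(z^2+x)=y^k$ and descent to a twisted equation of signature $(k,k,2)$ is indeed the standard route by which the $(2,k,4)$ entry of that table is established, so the overall strategy is the right one, and your coprimality and parity analysis is fine (up to one slip: in the even case the cofactor is $2^{\,k\,v_2(y)-2}$, which equals $2^{k-2}$ only when $v_2(y)=1$).

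The genuine gap is in the final black-box step. The Darmon--Merel theorem for signature $(n,n,2)$ concerns $a^n+b^n=z^2$; it says nothing about the equations your descent actually produces, namely $a^k+b^k=2z^2$ (odd case) and $b^k+2^{\alpha}a^k=z^2$ with $\alpha\ge 2$ (even case). The correct modular input is Bennett--Skinner's theorem on exactly these twisted $(n,n,2)$ equations, and that theorem is proved for \emph{prime} exponents $n\ge 7$; moreover, with the coefficient $2$ the level-lowered newform spaces are nonzero and contain CM forms reflecting the trivial solution $1^k+1^k=2\cdot 1^2$, so the elimination is more delicate than a trace comparison at one or two small primes. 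As a consequence your ``uniform in $k$'' appeal leaves uncovered the base exponents $k=4$ (which you do handle classically), $k=5$, and the exponents divisible only by $2$ and $3$: an integer such as $k=6$ or $k=9$ (and hence $12$, $18$, $27,\dots$, which reduce to these) has no divisor equal to $4$ or to a prime $\ge 5$, so it cannot be reached from your base cases at all; in the literature these small signatures are disposed of separately, by Chabauty-type computations of Bruin and others. Until the reduction of general $k>3$ to base exponents is spelled out and Darmon--Merel is replaced by the correct statements covering those bases, the argument is incomplete --- which is presumably why the paper simply cites \cite{M15} rather than reproving the result.
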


We also  need the following result due to Le \cite{Le1}. \begin{prop}[\cite{Le1}] \label{prop.1} The solutions of the equation \begin{equation*} U^2+2^k=V^l; \quad U, V, k, l\in\mathbb{N},\,\,
gcd(U,V)=1,\,\, l\ge 3 \end{equation*} are given by $(U,V,k,l)=(5,3,1,3), (7,3,5,4), (11,5,2,3).$
\end{prop}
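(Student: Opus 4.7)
The plan is to analyze $U^{2}+2^{k}=V^{l}$ by a case split on the parities of $k$ and $l$, exploiting unique factorization in the class-number-one rings $\mathbb{Z}[i]$ and $\mathbb{Z}[\sqrt{-2}]$. A preliminary parity check shows both $U$ and $V$ are odd: if $V$ were even, then $U^{2}\equiv -2^{k}\equiv 0\pmod{2}$ would force $U$ even, contradicting $\gcd(U,V)=1$; and then $U^{2}=V^{l}-2^{k}$ is odd minus even, hence odd.

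Assume first that $l$ is even, $l=2m$ with $m\ge 2$. Then $(V^{m}-U)(V^{m}+U)=2^{k}$; both factors are even and their $\gcd$ divides $2U$, so since $U$ is odd this $\gcd$ equals $2$. Therefore $V^{m}-U=2$ and $V^{m}+U=2^{k-1}$, giving
\[
V^{m}=1+2^{k-2},\qquad U=2^{k-2}-1.
\]
For $m=2$, the identity $(V-1)(V+1)=2^{k-2}$ with $V$ odd forces $V=3$, $k=5$, $U=7$, yielding the solution $(7,3,5,4)$. For $m\ge 3$, Mih\u{a}ilescu's theorem applied to $V^{m}-2^{k-2}=1$ with $V,m,k-2\ge 2$ rules out all remaining possibilities.

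Assume next that $l$ is odd. If $k=2t$ is even, factor in $\mathbb{Z}[i]$:
\[
(U+2^{t}i)(U-2^{t}i)=V^{l}.
\]
The two factors are coprime in $\mathbb{Z}[i]$ because any common divisor would divide both $2U$ and $2\cdot 2^{t}i$, impossible when $U$ is odd. Since $l$ is odd, every unit in $\mathbb{Z}[i]$ is an $l$-th power, so unique factorization yields $U+2^{t}i=(p+qi)^{l}$ with $p,q\in\mathbb{Z}$ and $V=p^{2}+q^{2}$. Comparing imaginary parts,
\[
2^{t}=\sum_{r\ge 0}(-1)^{r}\binom{l}{2r+1}p^{\,l-2r-1}q^{\,2r+1},
\]
and every summand is a multiple of $q$, forcing $q\mid 2^{t}$. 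The same argument in $\mathbb{Z}[\sqrt{-2}]$ when $k=2s+1$ is odd produces $U+2^{s}\sqrt{-2}=(p+q\sqrt{-2})^{l}$ with $V=p^{2}+2q^{2}$ and $q\mid 2^{s}$. For $l=3$, expanding each cube directly recovers the two remaining listed solutions: $(1-2i)^{3}=-11+2i$ gives $(11,5,2,3)$, and $(1-\sqrt{-2})^{3}=-5+\sqrt{-2}$ gives $(5,3,1,3)$.

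The central obstacle is eliminating the odd exponents $l\ge 5$. Writing $q=\pm 2^{j}$, the problem reduces to showing the Thue-type equation
\[
\pm 1=\sum_{r\ge 0}(-1)^{r}\binom{l}{2r+1}p^{\,l-2r-1}(\pm 2^{j})^{\,2r}
\]
has no odd solution $p$. The left-hand side is the $l$-th term of the Lucas sequence $U_{l}=(\alpha^{l}-\bar\alpha^{l})/(\alpha-\bar\alpha)$ attached to $\alpha=p+qi$ (respectively $\alpha=p+q\sqrt{-2}$), and I would invoke the Bilu--Hanrot--Voutier primitive divisor theorem: for every $l\ge 7$ outside a short explicit list of exceptional Lucas pairs, $U_{l}$ admits a primitive prime divisor, contradicting $|U_{l}|=1$. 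The residual cases, namely the exponent $l=5$, the finitely many Bilu--Hanrot--Voutier exceptions, and composite $l$ reducible to smaller odd primes, are cleared by applying the linear-forms-in-two-logarithms bound of Proposition~\ref{La} to $\varLambda=l\log(\alpha/\bar\alpha)$: the archimedean upper bound $|\alpha^{l}-\bar\alpha^{l}|\le 2^{s+1}$ combined with Laurent's lower bound confines $l$ to a small bounded range to be finished by direct computation.
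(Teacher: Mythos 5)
You should first note that the paper itself does not prove this proposition: it is quoted verbatim from Le \cite{Le1}, so your attempt has to stand on its own as a proof of Le's theorem, and as written it has genuine gaps, even though its skeleton (splitting on the parity of $l$ and $k$, factoring in $\mathbb{Z}[i]$ and $\mathbb{Z}[\sqrt{-2}]$, and reducing to a Lucas-number equation to be hit with primitive divisors) is the standard modern route. The even-$l$ case and the reduction $U+2^{t}i=(p+qi)^{l}$, resp. $U+2^{s}\sqrt{-2}=(p+q\sqrt{-2})^{l}$, with $q\mid 2^{t}$ are fine. The first real gap is at $l=3$: computing $(1-2i)^{3}$ and $(1-\sqrt{-2})^{3}$ only verifies that the two listed solutions occur; it does not show they are the \emph{only} ones with $l=3$. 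Completeness there requires actually solving $q(3p^{2}-q^{2})=\pm 2^{t}$ and $q(3p^{2}-2q^{2})=2^{s}$ with $q=\pm 2^{j}$ under the parity constraints (e.g.\ a mod $8$ analysis of $3p^{2}=4^{j}\pm 1$), and $l=3$ is precisely the index your later machinery cannot reach: $|3p^{2}-q^{2}|=1$ has infinitely many solutions, so no primitive-divisor statement applies at $n=3$, and nothing in your final paragraph covers it.

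The second gap is the closing step. Proposition~\ref{La} as stated requires real \emph{rational} $\alpha_{1},\alpha_{2}>1$ and a real linear form, so it cannot be applied to $\varLambda=l\log(\alpha/\bar\alpha)$ with $\alpha/\bar\alpha$ a complex algebraic number of modulus one; worse, the proposed archimedean bound does not even make the form small, since $|1-(\bar\alpha/\alpha)^{l}|=2^{t+1}/\sqrt{U^{2}+2^{2t}}$ is close to $2$ whenever $2^{k}$ dominates $U^{2}$, so the "Laurent lower bound versus small upper bound" mechanism fails in that regime. There is also a slip in the reduction itself: when $q=\pm 1$ (so $p$ even) the imaginary-part comparison gives $|U_{l}|=2^{t}$, not $1$; this subcase is still killed by primitive divisors (which are odd), but not by the equation you wrote. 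The clean way to finish along your lines is: reduce odd $l$ to an odd prime, treat $l=3$ by the elementary congruence analysis above, and for prime $l\ge 5$ use Bilu--Hanrot--Voutier together with Voutier's explicit tables of defective Lucas pairs (no linear forms in logarithms are needed at all). As written, the proof is incomplete.
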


\section{Proof of Theorem \ref{theo.1.1}} 
In this section, we prove Theorem \ref{theo.1.1}. We separate the cases $y=1$ and $y>1$. It is easy to see that $z>x$.
\subsection{The case $y=1$}
Here, we consider equation \eqref{eq1.4} where $m>1$ and $m=1$, respectively.

\subsubsection{The case $m>1$}
Reducing equation \eqref{eq1.4} modulo $2m$, we get  $1+0 \equiv (-1)^z \pmod {2m}$. Hence, $2\mid z$. Considering \eqref{eq1.4} modulo $2am$, we obtain $1+2m\equiv 1 \pmod{2am}$, a contradiction with $a>1$. 
\subsubsection{The case $m=1$}\label{**} When $m=1$, equation \eqref{eq1.4} becomes \begin{equation}\label{***} (2a+1)^x+2=(2a-1)^z. \end{equation} If $2\mid z$, reducing equation \eqref{***} modulo $2a$, we get $a=1$, which contradicts the assumption that $a>1$. If $2\nmid z$, considering equation \eqref{***} modulo $2a$, one gets $2a\mid 4$. Since $a>1$, we have
$a=2$. When $a=2$, equation \eqref{***} becomes $5^x+2=3^z$ which has only the solution $(x,z)=(2,3)$ by \cite[Theorem 3]{Na}. Since this is the desired solution of equation \eqref{eq1.4}, the proof of the case $y=1$ is completed.

\subsection{The case $y>1$}

\subsubsection{The case $2\mid x$ and $2\mid z$} Suppose that $x=2X$ and $z=2Z$. From equation \eqref{eq1.4}, we introduce two even positive integers $P$ and $Q$ as follows: \begin{equation}\label{eq2.1}
(2m)^y = PQ,
\end{equation} where
\begin{equation}\label{eq2.2}
P=(2am-1)^Z+(2am+1)^X, \ Q=(2am-1)^Z-(2am+1)^X.
\end{equation}
Since $Q$ is congruent to $0$ or $-2$ modulo $2am$, one has $PQ>(2am-1)^Z\cdot(2am-2)$. Since $Z=z/2>x/2=X \ge 1$, one obtains $(2am-1)^Z\cdot(2am-2)\ge (2am-1)^2\cdot(2am-2)$. Hence  $(2m)^y>(2am-1)^2\cdot(2am-2)> (2m)^3$ because $a>1$. So, $y>3$ and by Proposition \ref{po4}, we obtain $2\nmid Z$, whence  $Z\ge3$.

By equation \eqref{eq2.2}, we see that $\gcd (P,Q)=2$ and $P+Q\equiv 2 \pmod 4$, so $P/2$ and $Q/2$ are integers of different parities. Further $Q/2$ is coprime to $m$. Since $2^{y-2}m^y=(P/2)(Q/2)$ and $Q/2$ is coprime to $m$, it follows that only $Q/2=1$ or $Q/2=2^{y-2}$ are possible.
\begin{itemize}
\item The case $2\mid am.$
\end{itemize}

Consider equation \eqref{eq1.4} for the case $2\mid am$. When $2\mid am$, $Q/2\equiv -1 \pmod{am}$, only the case $Q=2$ is possible. Hence, one has \begin{equation}\label{xxy}
(2am-1)^{Z}-(2am+1)^{X}=2,\quad (2am-1)^{Z}+(2am+1)^{X}=2^{y-1}m^{y}. \end{equation}
Now consider equation
\begin{equation}\label{eq23}
(2am-1)^Z-(2am+1)^X=2.
\end{equation} 
Taking \eqref{eq23} modulo $2am$, one finds that $2am$ divides 2 or 4, so that $a=2$, $m=1$. Hence equation \eqref{eq23} becomes
\begin{equation}\label{Pil}
3^Z-5^X=2.
\end{equation}
By \cite[Theorem 6]{SS}, one  sees that equation \eqref{Pil} has only the solution $(X,Z)=(2,3),$  namely, equation \eqref{eq23} has only the solution $(a,m,X,Z)=(2,1,2,3)$. However, this contradicts the second equation in \eqref{xxy}.
\newpage
\begin{itemize}
\item The case $2\nmid am.$
\end{itemize}

Consider equation \eqref{eq1.4} for the case $2\nmid am$. We first deal with the case $2\nmid X$. By the former case, we know that equation \eqref{eq23} has no solutions. Thus, one gets $$(2am-1)^Z+(2am+1)^X=2m^y,\,\, 2\nmid m.$$ Reducing the above equation modulo 4, we obtain $$2am(X+Z)\equiv2\pmod{4},$$ which is impossible since $2\nmid XZ$.

Now we consider the case $2\mid X$. Then we have $$Q=(2am-1)^Z-(2am+1)^X=2 \ \ \text{or} \ \ 2^{y-1}.$$ Applying Proposition \ref{prop.1}, we see that the above equation has no solutions with $2\mid X$.

\subsubsection{The case $2\nmid\gcd(x, z)$}
In this subsection, we consider the case $2\nmid\gcd(x, z)$. Moreover, we consider the cases $m = 1$ and $m>1$ separately. \begin{itemize} \item The case $m=1.$ \end{itemize}

Consider the following Diophantine equation \begin{equation}\label{eq.4.5} (2a+1)^x+2^y=(2a-1)^z; \,\, y>1, \,\, 2\nmid\gcd(x,z). \end{equation} If $2\mid x$, then by Proposition \ref{prop.1}, we get $a=2$, $x=2$ and $y=1$. Hence we suppose that $2\nmid x$. If $2\nmid a$, then $2a+1\equiv3\pmod{4}$ and $2a-1\equiv 1\pmod{4}$, we get a contradiction reducing equation \eqref{eq.4.5} modulo $4$.

If $2\mid a$, then reducing equation \eqref{eq.4.5} modulo $4$, we get $2\mid z$. Reducing modulo $a$, we get $2^y\equiv0\pmod{a}$, so $a=2^u$, $u\ge1$. Further, we have $$2a(x+z)\equiv-2^y\pmod{2a^2}.$$ It follows that $u=y-1$. Equation \eqref{eq.4.5} becomes $$(2^y+1)^x+2^y=(2^y-1)^z.$$ By Theorem \ref{thm:A3}, we get the solution $(x,y,z)=(1,2,2)$. Since this is the desired solution of equation \eqref{eq1.4}, the proof of the case $m=1$ is completed.

\begin{itemize} \item The case $m>1.$ \end{itemize}

Since $m>1$, considering equation \eqref{eq1.4} modulo $2m$, one gets $1\equiv(-1)^z\pmod{2m}$, so $z$ is even. Hence, since we are in the case $2\nmid\gcd(x, z)$, we conclude that $x$ must be odd.

Now we will observe that this leads to a contradiction. By Lemma \ref{key}\,(i), we have \begin{equation}\label{2-t} y=\frac{v_{2}(a)}{v_{2}(2m)}+1 \le v_{2}(a)+1 \le \frac{\,\log a\,}{\log
2}+1. \end{equation} In what follows, we put $$ A=2am+1, \quad C=2am-1 \ (=A-2).$$ We separately consider the cases $A^x \le (2m)^{y}$ and $A^x>(2m)^{y}$.

\quad

\textbf{(i) The case $A^x \le (2m)^{y}$}

\quad

Now we suppose that $A^x \le (2m)^{y}$. Since $A^x<C^z\le2(2m)^{y}$ and $2m \ge 4$, by \eqref{2-t}, we have \begin{equation*} z\le
 \frac{\log 2+\bigr(\frac{\log a}{\log 2}+1\bigr)\log 2m }{\log (2am-1)}<1.5\log (2m). \end{equation*}

Recall that $z> x$. If $z=2$, then $x=1$ and equation \eqref{eq1.4} becomes $2am+1+(2m)^y=4a^2m^2-4am+1$. Hence, one gets
\begin{equation}\label{21}
(2m)^{y-1}=a(2am-3).
\end{equation}
Note that $2am-3>3$ since $a>1$ and $m>1$. Also, $3\mid m$ as $\gcd(2m,2am-3)>1$. Dividing the above equation by 3 leads to
\begin{equation*}
2^{y-1}3^{y-2}(m/3)^{y-1}=a(2a\cdot m/3-1).
\end{equation*}
It is easy to see that $2^{y-1}(m/3)^{y-1}\mid a,$ so that
\begin{equation*}
3^{y-2}=\dfrac{a}{2^{y-1}(m/3)^{y-1}}\cdot (2a\cdot m/3-1).
\end{equation*}
Since $\gcd(\dfrac{a}{2^{y-1}(m/3)^{y-1}},2a \cdot m/3-1)=1$, and $2a \cdot m/3-1>3/3=1,$ one has
\begin{equation*}
\dfrac{a}{2^{y-1}(m/3)^{y-1}}=1,\,\,2a\cdot m/3-1=3^{y-2}.
\end{equation*}
This implies that $(2m/3)^y-1=3^{y-2}$. It is easy  to see that this equation does not hold. Thus, one has \begin{equation}\label{xz} 4\le z \le
\lfloor 1.5 \log (2m)\rfloor \end{equation} which implies that $m \ge 8$. Moreover, equation \eqref{eq1.4} easily yields $(2m)^y>a(2m)^z$ and then one gets \begin{equation}\label{yz}
y>z\ge4. \end{equation} Since Lemma \ref{key}\,(ii) and the inequality \eqref{xz} imply that \begin{align*} \left( \frac{m}{\lfloor 1.5 \log (2m)\rfloor} \right)^{y}<C+1 \le
(2m)^{y/z}\cdot2^{1/z}+1<1.5 (2m)^{y/z}, \end{align*} we have \begin{equation}\label{x} z<\frac{\log (2m)}{\log (m) -\log \bigr( 1.5^{1/y}\lfloor 1.5 \log (2m)\rfloor\bigr)}. \end{equation}
In view of \eqref{xz}, \eqref{yz} and \eqref{x}, we have $$ m=8, \quad z =4, \quad x \in \{ 1,3\}. $$

If $z=4$, $x=1$ and $m=8$, then equation \eqref{eq1.4} becomes
\begin{equation}\label{22}
a(4096a^3-1024a^2+96a-5)=2^{4y-4}.
\end{equation}
Hence one obtains $a=2^{4y-4}$ and $4096a^3-1024a^2+96a-5=1.$ Taking the second inequality modulo 4 yields a contradiction.

If $z=4$, $x=3$ and $m=8$, then equation \eqref{eq1.4} becomes \begin{equation}\label{23}
a(4096a^3-1280a^2+48a-7)=2^{4y-4}.
\end{equation}
Similarly to the former case, we get that $4096a^3-1280a^2+48a-7=1$. Taking this equality modulo 16 yields a contradiction.

\quad

\textbf{(ii) The case $A^x>(2m)^{y}$}

\quad

Here, we suppose that $A^x>(2m)^{y}$. Then one gets $(2am+1)^x>\frac{1}{2}(2am-1)^z$ or $$2>\left(\frac{2am-1}{2am+1}\right)^x\cdot(2am-1)^{z-x}.$$
Since $z-x\ge1$, we have $$e<(2am-1)^{z-x}/2<\left(\frac{2am+1}{2am-1}\right)^x<e^{\frac{2x}{2am-1}},$$ which implies that $x>\frac{2am-1}{2}$. Thus, $$x\ge am.$$ Recall that $x$ is odd. By \eqref{2-t}, we have $y\le v_2(a)+1$. Thus, $$y\log(2m)\le(v_2(a)+1)\log(2m)\le a\log(2m)<\frac{am}{2}\log(2m)\le\frac{x}{2}\log A,$$ and we have $(2m)^y<A^{x/2}$. Thus,
$C^z=A^x+(2m)^y<A^x+A^{x/2}$, i.e., $$C^{z-x}<\left(\frac{2am+1}{2am-1}\right)^x\left(1+A^{-x/2}\right).$$

All solutions with $z\in\{2,4\}$ have been found in the case $A^x<(2m)^{y}$.

Since $C=2am-1\ge 2\cdot 2\cdot 2-1=7$ and $x\ge5$, we have $$(z-x)\log
C<x\log\left(1+\frac{2}{C}\right)+\log\left(1+A^{-x/2}\right)<x\left(\frac{2}{C}-\frac{4}{3C^2}\right)+A^{-x/2}<\frac{2x}{C}.$$ Therefore, we get \begin{equation}\label{x-z} x> \frac{C\log
(C)}{2}\,(z-x). \end{equation}

Put $$ \varLambda = z \log C - x \log A \ (>0). $$ Since $\varLambda<\exp(\varLambda)-1=(2m)^y/A^x<A^{-x/2}$, one has $$ \log \varLambda <- (x/2) \log A. $$ On the other hand, to find a
lower estimate of $\log \varLambda$, we apply Proposition \ref{La} with $(\alpha_1,\alpha_2)=(C,A)$ and $(\beta_1,\beta_2)=(z,x)$. Hence, we obtain $$ \log \varLambda > -25.2 \,(\log A)(\log
C) \bigr(\! \max  \{ \log \beta'+0.38,\,10\} \bigr)^{2}, $$ where $\beta'=\frac{x}{\log C}+\frac{z}{\log A}$. Since $2A^x>C^z$, the inequality $\beta'<\frac{2x}{\log C}+1$ gives $$ s <
50.4\, \bigr(\! \max\{\log (2s+1) +0.38,\,10 \} \bigr)^{2}, $$ where $s=x/\log C$. This implies that $s<5040$. Therefore, by \eqref{x-z}, one gets $A<10082$. From  here, using \eqref{2-t}
and $m \ge 2$, one gets $y \le 6$ when $m$ is even, and $y\le11$ when $m$ is odd.

We claim that
\begin{equation} \label{A}
A<5044.
\end{equation}
Assume for a contradiction that \eqref{A} does not hold, so $A\ge 5044$. Then,  \eqref{x-z} gives $x\ge 21493$. Since $y \le 11$, we see that $A^x>(2m)^{1953y}$
clearly holds. Since $\varLambda<\exp(\varLambda)-1=(2m)^y/A^x<A^{-1952x/1953}$, one has $$ \log \varLambda <- (1952x/1953) \log A. $$ Using Proposition \ref{La} as above, we get $$ s <
25.2\times\frac{1953}{1952}\, \bigr(\! \max\{\log (2s+1) +0.38,\,10 \} \bigr)^{2}$$ where $s=x/\log C$. This implies that $s<2522$ and $A<5044$ which is the desired contradiction. Hence, \eqref{A} holds. Note that $y \le 10$ by (\ref{2-t}) and the same argument as before.

Finally, we show that \begin{equation} \label{x2} x<2522 \,\log (A-2) \quad \mbox{or} \,\, \ \ x \le 1300y. \end{equation} If $x>1300y$, then $A^x>(2m)^{1300y}$, which implies that $s<2522$, that is, \eqref{x2} holds.

We can check that equation \eqref{eq1.4} does not hold for any $(a,m,x,y,z)$ satisfying all \eqref{2-t}, \eqref{x-z}, \eqref{A}, \eqref{x2} and $y\le10$. For this, a program was written in
PARI/GP, \cite{PARI}, and it took about 1 hour to run the program for each value of $y$ with the restriction $a\mid (2m)^{y-1}.$ This completes the proof of Theorem \ref{theo.1.1}.

\section*{Acknowledgments} We would like to thank the referees for carefully reading our paper and for giving such constructive comments which substantially helped improving the quality of the paper. The first and second authors would like to thank Professor Takafumi Miyazaki for drawing their attention to this problem. The second author would like to thank Dr. Paul Voutier for giving us useful ideas for speeding up our PARI-GP program. This work was started when the first and second authors participated to the conference \textquotedblleft Diophantine $m$-tuples and related problems-II'' on 15--17th October, 2018 in Purdue University Northwest, Westville/Hammond, in USA. They would like to thank Professors Bir Kafle and Alain Togb\'e for this nice organization and their kind hospitality and were supported by T\"{U}B\.{I}TAK (the Scientific and Technological Research Council of Turkey) under Project No: 117F287. The fourth author was supported by NSF of China (No. 11671153) and NSF of Guangdong Province (No. 2016A030313850).

\end{document}